\newcounter{enucount}
\newtheorem{prp}[enucount]{Proposition}
\newcommand{\ones}{\textbf{1}}
\newcommand{\conv}{\textrm{conv}}
\newcommand{\st}{\textrm{s.t.}}
\newcommand{\fctp}{\textrm{(FCTP)}}
\newcommand{\ip}{\textrm{(IP)}}
\newcommand{\ipz}{\textrm{(IP+}z\textrm{)}}
\newcommand{\ipl}[1]{\textrm{(IP}_{#1}\textrm)}
\title{Fixed-charge transportation problems on trees}
\author{Gustavo Angulo\textsuperscript{*}\footnote{Center for Operations Research and Econometric, Universit\'e catholique de Louvain}\textsuperscript{\dag}\footnote{Departamento de Ingenier\'ia Industrial y de Sistemas, Pontificia Universidad Cat\'olica de Chile}\quad Mathieu Van Vyve\textsuperscript{*}\\
gustavo.angulo@uclouvain.be \quad mathieu.vanvyve@uclouvain.be}
\begin{document}
\maketitle

\begin{abstract}
We consider a class of fixed-charge transportation problems over graphs. We show that this problem is strongly NP-hard, but solvable in pseudo-polynomial time over trees using dynamic programming. We also show that the LP formulation associated to the dynamic program can be obtained from extended formulations of single-node flow polytopes. Given these results, we present a unary expansion-based formulation for general graphs that is computationally advantageous when compared to a standard formulation, even if its LP relaxation is not stronger.\\


Keywords: fixed-charge transportation problem; single-node flow polytope; dynamic programming; pseudo-polynomial extended formulation.
\end{abstract}

\section{Introduction}

Let $n$ and $m$ be positive integers, and let $N:=\{1,\ldots,n\}$ and $M:=\{1,\ldots,m\}$. 
We refer to $i\in N$ and $j\in M$ as suppliers and customers, respectively. Let $c_i\geq 0$ be the available supply at node $i$ and let $d_j\geq 0$ be the demand at node $j$. We assume that $c$ and $d$ are integer-valued.
Also, let $q_{ij}$ be the fixed cost for sending a positive amount of flow from $i$ to $j$, and let $p_{ij}$ be the per unit flow cost (or revenue). The fixed-charge transportation problem $\fctp$ asks for a set of flows from suppliers to customers satisfying capacity limits at nodes and arcs such that the sum of fixed and variable costs is the least possible. A natural integer programming formulation is given by
\begin{eqnarray}
\nonumber \ipl{nm}&\min& p^\top x + q^\top y\\
\label{fctp_sup}&\st&\sum_{j=1}^m x_{ij}\leq c_i\quad  i\in N\\
\label{fctp_dem}&&\sum_{i=1}^n x_{ij}\leq d_j \quad  j\in M\\
\label{fctp_ub}&&y_{ij}\leq x_{ij}\leq \min\{c_i,d_j\}y_{ij} \quad  i\in N,\ j\in M\\
\label{fctp_bin}&&y_{ij}\in\{0,1\} \quad  i\in N,\ j\in M,
\end{eqnarray}
where $y_{ij}$ indicates whether or not there is a positive flow from $i$ to $j$, and $x_{ij}$ is the amount of flow. In view of (\ref{fctp_sup}) and (\ref{fctp_dem}), the roles of suppliers and customers are symmetric in the above formulation. However, the analysis we present here can be modified to fit models where the inequalities in (\ref{fctp_sup}) and (\ref{fctp_dem}) are reversed or set to equalities.

$\fctp$ generalizes the single-node flow problem and is therefore at least as hard (weakly NP-hard). Flow cover \cite{padberg1985valid,van1987solving} and lifted flow cover inequalities \cite{gu1999lifted} have been shown to be very effective. For the equality case, \cite{agarwal2012fixed} derives inequalities and facets for the projection on the $y$-space, and \cite{roberti2014fixed} introduces a reformulation with exponentially many variables which is solved with column generation. When the underlying graph is a path, \cite{van2013fixed} presents a class of path-modular inequalities and shows that they suffice to describe the convex hull of feasible solutions. For the single-node flow problems, \cite{alidaee2005note} presents an improved dynamic program.

For notational convenience, we formulate $\fctp$ as follows.
Let $G=(V,E)$ be a graph. Let $b_i\geq 0$ be the capacity of node $i\in V$. For each arc $(i,j)=(j,i)\in E$, we set $a_{ij}:=\min\{b_i,b_j\}$ as the capacity of the arc. Given cost vectors $p$ and $q$, consider
\begin{eqnarray}
\nonumber \ip&\min& p^\top x+q^\top y\\
\label{ipg_cap}&\st& \sum_{j\in V:\ (i,j)\in E}x_{ij} \leq b_i \quad i\in V\\
\label{ipg_ub}&& 0 \leq x_{ij}\leq a_{ij}y_{ij} \quad (i,j)\in E\\
\label{ipg_ybin}&&y_{ij}\in\{0,1\} \quad (i,j)\in E.
\end{eqnarray}
When $G$ is isomorphic to $K_{nm}$, the complete bipartite graph with $n$ and $m$ nodes on each side of the partition, we recover $\ipl{nm}$. 

Let $S$ be the set defined by (\ref{ipg_cap})--(\ref{ipg_ybin}), and let $P$ be its linear relaxation obtained by replacing (\ref{ipg_ybin}) with $0\leq y_{ij}\leq 1$. 
We denote $\conv(X)$ the convex hull of a set $X$ of real vectors. Our main object of study is $\conv(S)$.

Consider now the following unary-expansion based formulation
\begin{eqnarray}
\nonumber \ipz&\min& p^\top x+q^\top y\\
\nonumber &\st& \eqref{ipg_cap}, \eqref{ipg_ub}, \eqref{ipg_ybin} \\
\label{uip_xz}&& \sum_{l=0}^{a_{ij}} l\ z_{ijl} = x_{ij} \quad (i,j)\in E\\
\label{uip_yz}&& \sum_{l=1}^{a_{ij}} z_{ijl} \leq y_{ij} \quad (i,j)\in E\\
\label{uip_z}&& \sum_{l=0}^{a_{ij}} z_{ijl} = 1 \quad (i,j)\in E\\
\label{bnd_z}&& z_{ijl}\in\{0,1\} \quad (i,j)\in E, 0 \leq l \leq a_{ij},
\end{eqnarray}
where the intended meaning is that $z_{ijl}=1$ if $x_{ij}=l$ and $0$ otherwise. As the next easy proposition shows, $\ipz$ is \emph{not} stronger than $\ip$. 
\begin{prp}
The projection of the linear relaxation of $\ipz$ onto $(x,y)$ is equal to $P$.
\end{prp}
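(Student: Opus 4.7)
The plan is to prove the two inclusions separately. The inclusion ``projection $\subseteq P$'' is immediate: every feasible solution $(x,y,z)$ of the linear relaxation of $\ipz$ satisfies, in particular, constraints \eqref{ipg_cap}, \eqref{ipg_ub}, and $0 \leq y_{ij} \leq 1$, which are exactly the defining inequalities of $P$, so $(x,y) \in P$.

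For the nontrivial direction ``$P \subseteq$ projection'', given $(x,y) \in P$, I would construct feasible $z$-values edge by edge. For each arc $(i,j) \in E$ with $a_{ij} \geq 1$, set $z_{ij,a_{ij}} := x_{ij}/a_{ij}$, $z_{ij,0} := 1 - x_{ij}/a_{ij}$, and $z_{ij,l} := 0$ for $1 \leq l \leq a_{ij}-1$; for edges with $a_{ij}=0$, simply set $z_{ij,0}:=1$ (in which case $x_{ij}=0$ is forced by \eqref{ipg_ub}). A direct check then shows that \eqref{uip_z} holds because the nonzero components sum to $1$; \eqref{uip_xz} gives $a_{ij}\cdot(x_{ij}/a_{ij}) = x_{ij}$; \eqref{uip_yz} reduces to $x_{ij}/a_{ij} \leq y_{ij}$, which is exactly the upper bound in \eqref{ipg_ub}; and non-negativity of $z_{ij,0}$ reduces to $x_{ij} \leq a_{ij}$, which follows from $x_{ij} \leq a_{ij}y_{ij} \leq a_{ij}$.

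The statement is essentially a routine verification with no substantial obstacle. The only observation needed is that the inequality in \eqref{uip_yz}, rather than an equality, leaves enough slack so that one need not distribute all of the ``mass'' $y_{ij}$ across the positive-$l$ components of $z$. Concentrating everything at the single level $l = a_{ij}$ converts the bound $x_{ij} \leq a_{ij}y_{ij}$ directly into feasibility of the $z$-variables, which is precisely the reason the extended formulation fails to strengthen $P$.
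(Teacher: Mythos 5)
Your proof is correct and follows exactly the paper's argument: the forward inclusion is immediate, and for the reverse inclusion the paper constructs the same $z$ with all mass concentrated at levels $0$ and $a_{ij}$. You spell out the verification (and the degenerate case $a_{ij}=0$) in more detail than the paper, which simply calls it straightforward, but the construction is identical.
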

\begin{proof}
We just need to prove the reverse inclusion. Given $(x,y)\in P$, it suffices to exhibit a vector $0\leq z\leq \ones$ such that (\ref{uip_xz})--(\ref{uip_z}) are satisfied. For each arc $(i,j)$, let $z_{ija_{ij}}=x_{ij}/a_{ij}$, $z_{ij0}=1-z_{ija_{ij}}$ and $z_{ijl}=0$ for $0<l<a_{ij}$. Verifying that the constraints are satisfied is straightforward.
\end{proof}
However, we will see that modern IP solvers are much more effective at tightening $\ipz$ than $\ip$. 

This paper can be read as a theoretical explanation of this empirical finding. More specifically our contributions are the following. In Section~\ref{dp} we prove that  $\fctp$ is strongly NP-hard in general, but is pseudo-polynomially solvable when $G$ is a tree. Based on this last result, we show in Section~\ref{formulations} that single-node tightening of $\ipz$ is actually sufficient to describe $\conv(S)$ when $G$ is a tree. Finally we empirically demonstrate in Section~\ref{comp} the substantial performance improvements obtained when using $\ipz$ to solve with CPLEX randomly generated instances of $\fctp$, when numbers $b_i$ are not too large integers.

\section{Complexity Results} \label{dp}

\subsection{General graphs}
\begin{prp}
$\fctp$ is strongly NP-hard.
\end{prp}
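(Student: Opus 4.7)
The plan is to reduce from 3-partition, which is strongly NP-hard. An instance consists of $3m$ positive integers $s_1,\ldots,s_{3m}$, each strictly between $B/4$ and $B/2$ and summing to $mB$; the question is whether they can be split into $m$ triples of sum $B$ each. The numerical data of the reduction I describe are capacities bounded by $B$ and costs in $\{0,1\}$, so the construction is polynomial even under unary encoding and strong NP-hardness of $\fctp$ will follow.

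The construction: take the bipartite graph $K_{3m,m}$ with supplies $c_i=s_i$ and demands $d_j=B$, so that the induced arc capacities are $a_{ij}=s_i$. Set $p_{ij}=0$ and $q_{ij}=1$ for every arc, and use the equality-constrained version of $\ipl{nm}$ (permitted by the remark right after that formulation). I claim that the 3-partition instance is feasible if and only if the resulting $\fctp$ has optimal value at most $3m$. The forward direction is immediate: route the full $s_i$ from supplier $i$ along the single arc to the customer of its assigned triple, using exactly $3m$ arcs of fixed cost $1$ each.

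For the converse, any feasible solution delivers exactly $B$ units to each customer along arcs of individual flow at most $\max_i s_i < B/2$, so each customer is incident to at least three active arcs, forcing the total number of active arcs to be at least $3m$. Matching the upper bound of $3m$ pins each customer to exactly three active arcs; moreover, every supplier must ship the positive amount $s_i$ and hence uses at least one active arc, so by the identity $\sum_i(\textrm{arcs at } i) = \sum_j(\textrm{arcs at } j) = 3m$ every supplier uses exactly one active arc. Assigning each supplier to the unique customer it feeds then recovers a valid 3-partition. The main obstacle is this twin counting argument that simultaneously collapses the solution to an integral supplier-to-customer assignment; it relies crucially on the three-per-customer lower bound coming from $s_i < B/2$, which is exactly the property that makes 3-partition strongly hard.
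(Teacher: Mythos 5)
Your reduction is sound and rests on the same core idea as the paper's: reduce from 3-partition, make the per-arc fixed charge count active arcs, and show the count hits its lower bound exactly when the flow pattern is a supplier-to-customer assignment realizing the partition. The construction is essentially the transpose of the paper's. The paper uses $n$ suppliers of capacity $b$ and $3n$ clients with demands $a_1,\ldots,a_{3n}$, so its lower bound of $3n$ arcs needs only the trivial fact that each client must touch at least one arc, and the window $b/4<a_i<b/2$ is used only at the end to conclude the groups are triples. You put the $3m$ partition elements on the supply side and the $m$ bins on the demand side, so you need the upper bound $s_i<B/2$ to force at least three active arcs per customer, followed by the double count (total arcs $=3m=$ number of suppliers, each supplier needs at least one) to pin each supplier to a single arc. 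Both arguments are correct; yours uses the 3-partition bounds earlier but is no harder.

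The one point you should repair is the reliance on the equality-constrained variant. The proposition, as stated, is about $\fctp$ as defined by $\ipl{nm}$ with inequality constraints \eqref{fctp_sup}--\eqref{fctp_dem}, and with $p_{ij}=0$, $q_{ij}=1$ and only ``$\leq$'' constraints the all-zero solution is feasible and optimal at cost $0$, so your instance collapses. Hardness of the equality variant does not formally transfer to the inequality variant. The paper sidesteps this by setting $p_{ij}=-2$: sending a unit of flow is then strictly more profitable than the unit fixed charge, so every optimal solution saturates all supplies and the objective becomes $-2mB$ plus the number of active arcs. The same modification patches your construction with no change to the counting argument (each supplier still ships exactly $s_i$ in any optimal solution), so either add the negative unit costs or state explicitly that you are proving hardness of the equality variant invoked in the remark after $\ipl{nm}$.
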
 
\begin{proof}
We give a reduction of the strongly NP-complete 3-Partition to $\fctp$ \cite{Garey:1979:CIG:578533}. In 3-Partition we are given $3n$ nonnegative integers $a_1,\hdots,a_{3n}$ that sum up to $nb$ and are strictly between $b/4$ and $b/2$. The problem is to decide whether they can be partitioned into $n$ groups that each sum up to $b$. Consider an instance of $\fctp$ with $n$ suppliers with capacity $b$ each, $3n$ clients with demands $a_1,\hdots,a_{3n}$, $p_{ij}=-2$ variable cost and unit fixed cost $q_{ij}=1$ for all pairs $(i,j)$. 

The optimal solution to this instance of $\fctp$ satisfies $\sum_{j} x_{ij}=b$ for all $i$ because the fixed cost of opening any arc is smaller than the benefit of sending one unit of flow along this arc. Therefore the problem reduces to using as few arcs as possible to saturate all nodes. Because each client has to be linked to at least one supplier, any solution has cost at least $-2nb+3n$.

If the optimal solution value is exactly $-2nb+3n$ and therefore each client is fully served by one supplier only, then this solution yields the desired partition and the answer to 3-Partition is yes. If the answer of 3-Partition is yes, then the (obvious) assignment of clients to suppliers according to the 3-Partition solution has indeed cost $-2nb+3n$.
\end{proof}

\subsection{Trees}
We assume in this subsection that $G$ is a tree. Let node 1 be the (arbitrarily chosen) root of $G$. For node $i$, let $p(i)$ denote its parent node (we set $p(1)=0$) and let $\{f(i),\ldots,l(i)\}$ denote its children, which we assume are numbered consecutively (as with breadth-first search). In presenting a dynamic programming formulation, it is understood that $p(j)=i$ for any arc $(i,j)\in E$ and thus $i<j$.

Let $(i,j)\in E$. For $0\leq l\leq a_{ij}$, let $\beta_{ijl}$ denote the optimal value of the problem restricted to the subtree induced by $i$, $j$, and its descendants, under the condition that $x_{ij}=l$ (see Fig.~\ref{fig:dp2}). Similarly, for $0\leq k\leq b_i$, let $\alpha_{ijk}$ denote the optimal value of the problem on the subtree induced by $i, f(i),\ldots,j$, and their descendants, under the condition that $\sum_{f(i)\leq j'\leq j}x_{ij'}=k$ (see Fig.~\ref{fig:dp3}). 
Then we have that the optimal value of $\ip$ is given by $\beta_{010}$ such that
\begin{equation}\label{dp_alpha}
\alpha_{ijk}= \left\{
\begin{array}{rl}
\displaystyle \beta_{ijk} & j=f(i)\\
\displaystyle \min_{0\leq k-k'\leq a_{ij}} \left\{\alpha_{i(j-1)k'} + \beta_{ij(k-k')}\right\} & j>f(i)
\end{array}\right. \quad (i,j)\in E,\ 0\leq k\leq b_i
\end{equation}
\begin{equation}\label{dp_beta}
\beta_{ijl}= \left\{
\begin{array}{rl}
\displaystyle c_{ijl} & j\ \textrm{is a leaf node}\\
\displaystyle \min_{0\leq k\leq b_j-l} \left\{\alpha_{jl(j)k}\right\} + c_{ijl} & j\ \textrm{is a nonleaf node}
\end{array}\right. \quad (i,j)\in E,\ 0\leq l\leq a_{ij}
\end{equation}
\begin{equation}\label{dp_ojb}
\beta_{010} = \min_{0\leq k\leq b_1}\left\{\alpha_{1l(1)k}\right\}
\end{equation}

where, for $(i,j)\in E$, we set $c_{ij0} = 0$ and $c_{ijl} = lp_{ij}+q_{ij}$ for $l\geq 1$.

\begin{figure}[h]
 \begin{center}
 \begin{subfigure}[b]{0.45\textwidth}
   \includegraphics[scale=0.3,keepaspectratio=true]{./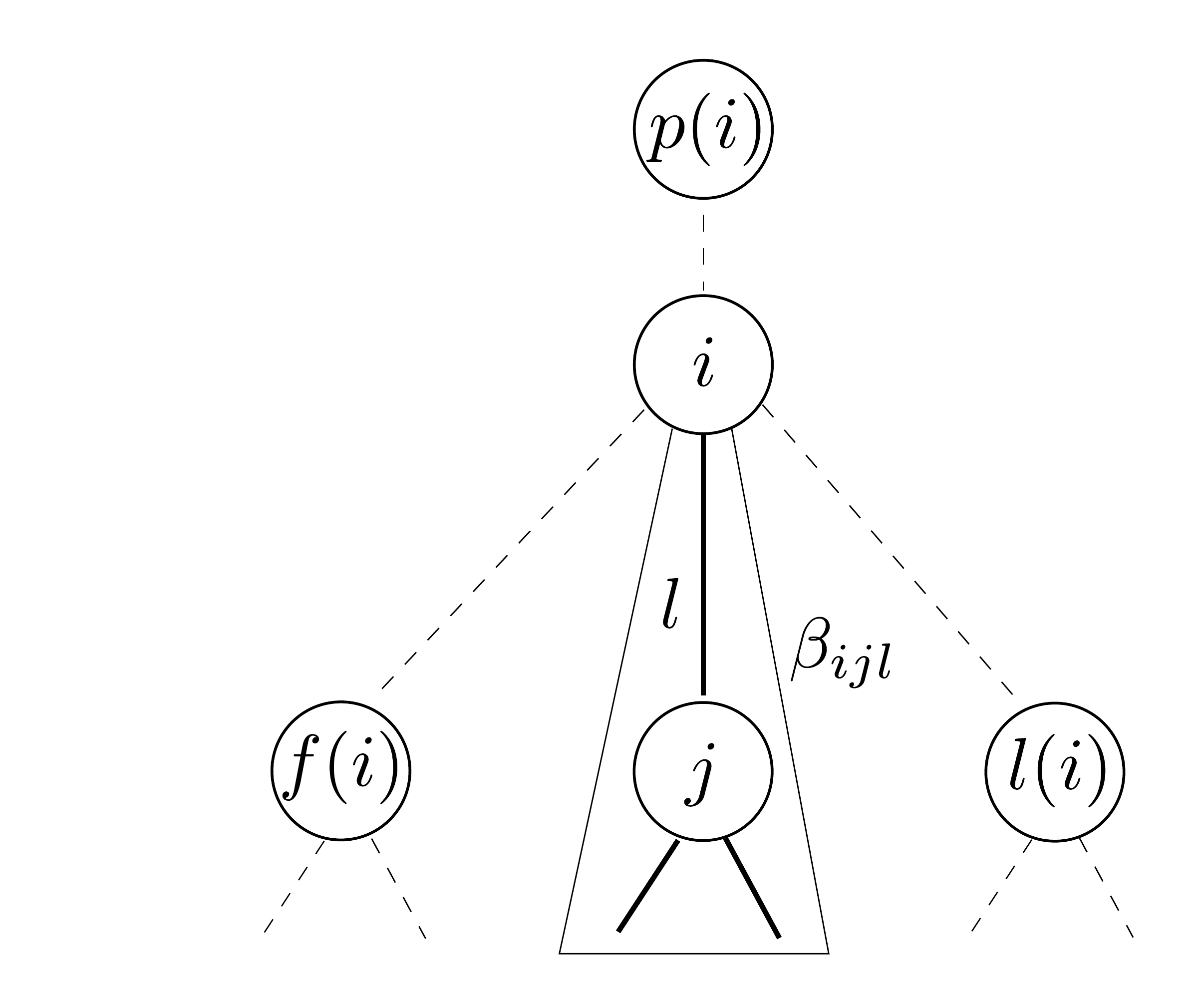}
   \caption{Restricted problem defining $\beta_{ijl}$.}
   \label{fig:dp2}
 \end{subfigure}
 \begin{subfigure}[b]{0.45\textwidth}
   \includegraphics[scale=0.3,keepaspectratio=true]{./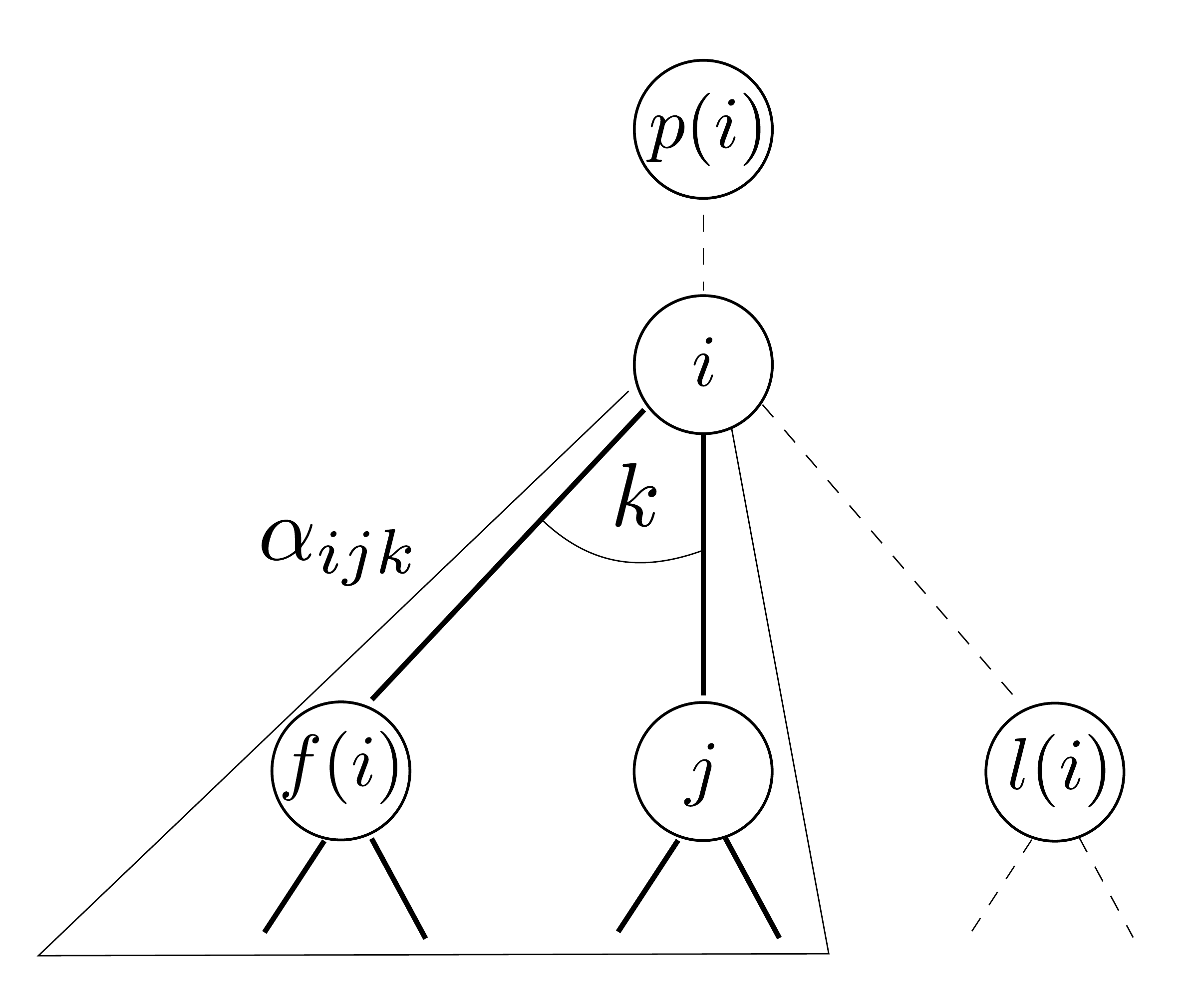}
   \caption{Restricted problem defining $\alpha_{ijk}$.}
   \label{fig:dp3}
 \end{subfigure}
\end{center}
\caption{Representation of dynamic program on a tree.}
\label{fig:dp}
\end{figure}

The key property exploited for this construction is the fact that for a tree, if the flow on an edge is fixed, the problem decomposes into two subproblems of exactly the same type. 

\section{Extended Formulations}\label{formulations}

In LP form, (\ref{dp_alpha})--(\ref{dp_ojb}) leads to
\begin{eqnarray*}
\max\ \beta_{010}\\
\st\ \alpha_{ijk}\leq \beta_{ijk} & (i,j)\in E,\ j=f(i),\ 0\leq k\leq b_i & (u_{ij0k})\\
\alpha_{ijk}\leq \alpha_{i(j-1)k'} + \beta_{ij(k-k')} & (i,j)\in E,\ j>f(i),\ 0\leq k'\leq k\leq b_i:\ k-k'\leq a_{ij} & (u_{ijk'k})\\
\beta_{ijl}\leq c_{ijl} & (i,j)\in E,\ 0\leq l\leq a_{ij},\ j\ \textrm{is a leaf node} & (v_{ijl0})\\
\beta_{ijl}\leq \alpha_{jl(j)k} + c_{ijl} & (i,j)\in E,\ 0\leq l\leq a_{ij},\ 0\leq k\leq b_j-l,\ j\ \textrm{is a nonleaf node} & (v_{ijlk})\\
\beta_{010}\leq \alpha_{1l(1)k} & 0\leq k\leq b_1 & (v_{010k}).
\end{eqnarray*}

Taking the dual we obtain
\begin{eqnarray}
\label{dual_obj} \min\ \sum_{(i,j)\in E}\sum_{k,l}c_{ijl}v_{ijlk}\\
\label{dual_flowuv} \st\ \sum_{0\leq k-k'\leq a_{ij}}u_{ijk'k} = 
\left\{\begin{array}{ll}
\displaystyle \sum_{0\leq k'-k\leq a_{i(j+1)}}u_{i(j+1)kk'} & f(i)\leq j < l(i)\\        
\displaystyle \sum_{0\leq l\leq b_i-k}v_{p(i)ilk} & j= l(i)\\
       \end{array}\right. & (i,j)\in E,\ 0\leq k\leq b_i \\
\label{dual_revuv} \sum_{0\leq k\leq b_j-l} v_{ijlk} = \sum_{k-k'=l}u_{ijk'k} & (i,j)\in E,\ 0\leq l\leq a_{ij}\\
\label{dual_unituv} \sum_{0\leq k\leq b_1}v_{010k} = 1 \\
\label{dual_u}u\geq 0\\
\label{dual_v}v\geq 0.
\end{eqnarray}

Note that the left-hand side of (\ref{dual_flowuv}) reduces to $u_{ij0k}$ if $j=f(i)$. Similarly, the left-hand side of (\ref{dual_revuv}) reduces to $v_{ijl0}$ if $j$ is a leaf, and its right-hand side equals to $u_{ij0l}$ if $j=f(i)$.

From (\ref{dual_obj}) and the definition of $c_{ijl}$, we have
$$\sum_{(i,j)\in E}\sum_{k,l}c_{ijl}v_{ijlk}=\sum_{(i,j)\in E}\sum_{l>0}(lp_{ij}+q_{ij})\sum_{k}v_{ijlk}
=\sum_{(i,j)\in E}p_{ij}\sum_{l}l\sum_{k}v_{ijlk}+\sum_{(i,j)\in E}q_{ij}\sum_{l>0}\sum_{k}v_{ijlk}.$$
Thus using the mappings $x_{ij} = \sum_{l}l\sum_{k}v_{ijlk}$ and $y_{ij}=\sum_{l>0}\sum_{k}v_{ijlk}$, (\ref{dual_flowuv})--(\ref{dual_v}) becomes an extended formulation for $\conv(S)$ of pseudo-polynomial size. Let us call it $Q_{DP}$.

On the other hand, we can derive another formulation for $S$, of pseudo-polynomial size as well, by writting an extended formulation for the convex hull of the single-node flow problem at each node of $G$ and adding an appropriate set of linking constraints. 
More precisely, for each $i\in V$, we consider a vector $f_i$ which indicates how flows from $i$ are assigned to $p(i)$ and $f(i),\ldots,l(i)$. 
Let $f_{ijk'k}$ be a binary variable taking the value 1 if and only if $\sum_{f(i)\leq j'< j}x_{ij'}=k'$ and $x_{ij}=k-k'$. If $j=f(i)$, then only  variables $f_{ij0k}$ are defined. Similarly, let $f_{ip(i)k'k}$ be a binary variable taking the value 1 if and only if $\sum_{f(i)\leq j'\leq l(i)}x_{ij'}=k'$ and $x_{ip(i)}=k-k'$. If $i$ is a leaf node, then only variables $f_{ip(i)0k}$ are defined, and if $i=1$, then we set $a_{10}=0$ and only variables $f_{10kk}$ are defined. 
Thus we have that $f=(f_1,\ldots,f_{|V|})$ must satisfy
\begin{eqnarray}
\label{flowf} \sum_{0\leq k-k'\leq a_{ij}} f_{ijk'k} = 
\left\{\begin{array}{ll}
\displaystyle \sum_{0\leq k'-k\leq a_{i(j+1)}} f_{i(j+1)kk'} & f(i)\leq j < l(i)\\        
\displaystyle \sum_{0\leq k'-k\leq a_{ip(i)}} f_{ip(i)kk'} & j= l(i)\\
       \end{array}\right. & (i,j)\in E,\ 0\leq k\leq b_i \\
\label{unitf} \sum_{0\leq k-k'\leq a_{ip(i)}}f_{ip(i)k'k} = 1 &  i\in V\\
\label{linkf} \sum_{k-k'=l}f_{ijk'k} = \sum_{k-k'=l}f_{jik'k} & (i,j)\in E,\ 0\leq l\leq a_{ij}\\
\label{f0} f\geq 0.
\end{eqnarray}
Since in this case we use the mappings $x_{ij}=\sum_{k'\leq k}(k-k')f_{ijk'k}$ and $y_{ij}=\sum_{k'<k}f_{ijk'k}$, constraints (\ref{linkf}) ensure that the flows from $i$ to $j$ and from $j$ to $i$ are consistent.
Let $Q_{SN}$ be the polyhedron defined by (\ref{flowf})--(\ref{f0}). By construction, $\conv(S)$ is contained in the projection of $Q_{SN}$ onto the $(x,y)$-space. We show that equality holds.

\begin{prp}\label{localgolbal}
$Q_{SN}$ is an extended formulation for $\conv(S)$.
\end{prp}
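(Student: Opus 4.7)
The plan is to prove both inclusions. The direction $\conv(S) \subseteq \mathrm{proj}_{(x,y)}(Q_{SN})$ is immediate: for any integer $(\bar x, \bar y) \in S$, set $\bar f_{ij k'k} = 1$ when $\sum_{f(i) \leq j' < j} \bar x_{ij'} = k'$ and $\bar x_{ij} = k-k'$, and $0$ otherwise. A direct check shows $\bar f$ satisfies (\ref{flowf})--(\ref{f0}) and projects to $(\bar x, \bar y)$, so convexity covers all of $\conv(S)$.

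For the reverse inclusion, I would fix $f \in Q_{SN}$ and observe that for each $i \in V$, the subsystem of (\ref{flowf})--(\ref{unitf}) involving only the variables $f_{ij k'k}$ with first index $i$ is the node-arc incidence system of a unit flow on an auxiliary directed path network at $i$: its nodes are pairs $(j, k)$, where $j$ ranges over the ordered neighbor list $f(i), f(i)+1, \ldots, l(i), p(i)$ and $k \in \{0, \ldots, b_i\}$ records the cumulative flow dispatched so far, and each $f_{ijk'k}$ is the flow on the arc from $(j-1, k')$ to $(j, k)$. This incidence matrix is totally unimodular, so every vertex of $\{f_i \geq 0 : (\ref{flowf}), (\ref{unitf})\}$ is integer and corresponds to a feasible single-node pattern at $i$. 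Hence each $f_i$ decomposes as $\sum_{\sigma} \mu^i_\sigma f^{(i,\sigma)}$, where $\sigma$ ranges over feasible single-node patterns at $i$ and $\mu^i$ is a probability distribution on them. The linking constraints (\ref{linkf}) then say exactly that for every edge $(i,j) \in E$ and every $l \in \{0, \ldots, a_{ij}\}$, the $\mu^i$-marginal of $\{x_{ij} = l\}$ agrees with the $\mu^j$-marginal.

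The final step uses the tree structure to stitch the $\{\mu^i\}_{i \in V}$ into a joint distribution $\nu$ over tuples $s = (s_i)_{i \in V}$ of single-node patterns. Rooting $G$ at node $1$ and processing nodes in BFS order, I would set
\[
\nu(s) \;:=\; \mu^1(s_1)\prod_{i \neq 1}\mu^i\bigl(s_i \,\bigm|\, s_i\text{ agrees with }s_{p(i)}\text{ on edge }(p(i),i)\bigr).
\]
The linking constraint on $(p(i),i)$ makes each conditional well-defined, and induction along the tree shows that the marginal of $\nu$ at every node $i$ is exactly $\mu^i$. By construction $\nu$ is supported on tuples that agree on every edge flow, hence on elements of $S$, and it expresses $(x,y) = \mathrm{proj}(f)$ as a convex combination of points in $S$, so $(x,y) \in \conv(S)$.

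The main obstacle is this last step: verifying carefully that the sequential conditioning yields a well-defined distribution with the prescribed single-node marginals. This tree-specific coupling is where the hypothesis on $G$ is essential — on graphs with cycles, local marginal consistency does not imply the existence of a globally consistent joint, in line with the NP-hardness result from Section~\ref{dp}.
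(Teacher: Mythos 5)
Your proof is correct in outline but follows a genuinely different route from the paper's. The paper does not argue directly about $\conv(S)$ at all: having already established, via LP duality applied to the dynamic program, that $Q_{DP}$ defined by (\ref{dual_flowuv})--(\ref{dual_v}) is an extended formulation for $\conv(S)$, it simply exhibits an injective, projection-preserving linear map $\pi:Q_{SN}\to Q_{DP}$, namely $u_{ijk'k}=f_{ijk'k}$ and $v_{ijlk}=f_{jik(k+l)}$ for $i<j$, and checks that the constraints of $Q_{DP}$ follow from (\ref{flowf})--(\ref{f0}); the whole proof is a change of variables. Your argument is self-contained: you observe that each single-node block of (\ref{flowf})--(\ref{unitf}) is a unit-flow (hence totally unimodular) system whose vertices are integral single-node patterns, that (\ref{linkf}) forces the edge marginals of the induced distributions $\mu^i$ to agree, and that on a tree pairwise-consistent marginals can be glued into a joint distribution by sequential conditioning along a BFS order. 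That coupling step is where the remaining work sits --- you must verify that each conditional is well-defined on the relevant support (which (\ref{linkf}) guarantees) and that the node marginals of $\nu$ are exactly the $\mu^i$ (a telescoping induction down the tree) --- but it is standard and goes through. The paper's route buys brevity, since $Q_{DP}$ is already in hand; yours buys independence from the dynamic-programming machinery and makes explicit where the tree hypothesis enters (local marginal consistency does not imply global consistency once $G$ has cycles), a point the paper leaves implicit. One shared caveat: both your ``easy'' direction and the paper's ``by construction'' remark identify $y_{ij}$ with the indicator of $x_{ij}>0$, which ignores feasible points of $S$ having $y_{ij}=1$ and $x_{ij}=0$; this is a convention inherited from the paper rather than a defect specific to your argument.
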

\begin{proof}
Since $Q_{DP}$ is an extended formulation for $\conv(S)$, it suffices to show that there exists an injective linear mapping $\pi:Q_{SN} \longrightarrow Q_{DP}$ such that for any $f\in Q_{SN}$, both $f$ and $\pi(f)$ have the same projection onto the $(x,y)$-space. Let $\pi(f)=(u,v)$ be defined by $u_{ijk'k}=f_{ijk'k}$ and $v_{ijlk}=f_{jik(k+l)}$ for $i<j$. 

We first show that $(u,v)\in Q_{DP}$. Clearly, (\ref{dual_u}) and (\ref{dual_v}) are implied by (\ref{f0}). Since $f$ satisfies (\ref{flowf}), then $(u,v)$ satisfies (\ref{dual_flowuv}). Constraint (\ref{dual_revuv}) is implied by (\ref{linkf}). 
Taking (\ref{unitf}) for $i=1$ yields
$$1=\sum_{0 \leq k\leq b_1}f_{1p(1)kk} = \sum_{0 \leq k\leq b_1}f_{10kk}=\sum_{0 \leq k\leq b_1}v_{010k},$$
implying (\ref{dual_unituv}). Therefore, $(u,v)\in Q_{DP}$ and $\pi$ is well-defined. Seeing that $\pi$ is injective is immediate from its definition. Verifying that the projections of $f$ and $\pi(f)$ coincide follows from (\ref{linkf}) and a simple change of variables.
\end{proof}

Note that \eqref{linkf} can be equivalently rewritten as:
\begin{eqnarray}
\label{linkf1} \sum_{k-k'=l}f_{ijk'k} = z_{ijl} & (i,j)\in E,\ 0\leq l\leq a_{ij}\\
\label{linkf2} \sum_{k-k'=l}f_{jik'k} = z_{ijl} & (i,j)\in E,\ 0\leq l\leq a_{ij},
\end{eqnarray}
where, as in $\ipz$, $z_{ijl}=1$ if $x_{ij}=l$ and $0$ otherwise. 
It should then be clear that (i) formulation  \eqref{flowf}--\eqref{unitf}, \eqref{f0}--\eqref{linkf2} is an exact extended formulation when $G$ is a tree, and (ii) that this exact formulation is obtained by convexifying (in the $(x,y,z)$-space of variables) each single-node relaxation of $\ipz$ separately.

Note that this is very much in the spirit of Bodur et al. \cite{bodur2015cutting}. They show that the LP relaxation of any mixed-binary linear program admits an extended formulation, not stronger than the original formulation, but whose split closure is integral. However, the proof is existential only in the sense that it requires a complete enumeration of all extreme points of the LP relaxation, which in general is an extremely demanding task. Here, for the specific case of $\fctp$, we exhibit an extended formulation, not stronger than the orginal one, but whose "single-node closure" is integral. Note that this is not the case for the standard formulation $\ip$: convexifying each single-node relaxation of $\ip$ separately (in the $(x,y)$-space of variables) does not yield an integral polyhedron, even when $G$ is a tree.

When $G$ is not a tree, Proposition~\ref{localgolbal} still suggests a strong (albeit not exact) formulation for $\fctp$: write the network flow-based extended formulation at each node of $\ipz$. The resulting formulation will be at least as strong as the intersection of all tree-relaxations in the original variable space.
However, the number of variables grows quadratically with the size of the entries of $b$, which is impractical except for very small values. 

Fortunately, modern IP solvers have very effective built-in cut separation routines for single-node relaxations of the type found in $\ipz$. Therefore instead of explicitely tightening these single-node relaxations in the model, a practical approach that we explore in the next section is to just give formulation $\ipz$ to the solver and rely on its automatic tightening capabilities. The intended benefit is to reduce the number of variables to linear from quadratic in the size of $b$, but at the cost of a reduced strength of the dual bound.

\section{Computational Experiments}\label{comp}

We generated random instances as follows. We consider problems of the form $\ipl{nm}$ where (\ref{fctp_dem}) is set to equality. The number of suppliers and customer is the same, $n$, chosen from $\{20,30,40\}$. We also choose a number $B\in\{20,40,60\}$ and a factor $r\in\{0.90,0.95,1.00\}$ as the total demand to total supply ratio. Given $B$ and $r$, we sample each $c_i$ and $d_j$ independently from the uniform distribution on $\{1,\ldots,B\}$. Let $C$ and $D$ be the sampled total supply and total demand, respectively. If $D<\lceil rC \rceil$, then we iterate over $j$ and increase $d_j$ by one unit if $d_j<B$, one $j$ at a time, until  $D=\lceil rC \rceil$. Similarly, if $D>\lceil rC \rceil$, then we iterate over $i$ and increase $c_i$ by one unit if $c_i<B$, one $i$ at a time, until  $D=\lceil rC \rceil$. Fixed cost are sampled independently from the uniform distribution on $\{200,\ldots,800\}$, while variable costs are set to zero. 

We generated 10 instances for each combination of $n$, $B$, and $r$, which are solved using formulations $\ip$ and $\ipz$. For each formulation, we gather information at two stages: after the root node has been processed (\emph{Root}) and at the end of branch-and-cut (\emph{B\&C}). We report average results for \emph{Time} (seconds), \emph{Gap} (\%), and \emph{Nodes}. For $\ipz$, we also report the average percentage difference of the lower bound (\emph{$\Delta$LB}) and upper bound (\emph{$\Delta$UB}) compared to those obtained by $\ip$ at the root node and at end of branch-and-cut. We use CPLEX 12.6.1 as the solver, running on Linux with a single thread and a time limit of 3600 seconds. Tables \ref{t20}, \ref{t30}, and \ref{t40} summarize our results. 

\begin{table}[ht]
\small
\begin{center}
\begin{tabular}{ccrrrrrrrrrrrr}
\toprule
	&		&	\multicolumn{4}{c}{$\ip$}					&\multicolumn{8}{c}{$\ipz$}						\\
				\cmidrule(lr){3-6}							\cmidrule(lr){7-14}
	&		&	\multicolumn{1}{c}{Root}		&\multicolumn{3}{c}{B\&C}	&\multicolumn{3}{c}{Root}	&\multicolumn{5}{c}{B\&C}			\\
			\cmidrule(lr){3-3}		\cmidrule(lr){4-6}				\cmidrule(lr){7-9}		\cmidrule(lr){10-14}
$B$	&	$r$	&	Time	&	Gap	&	Time	&	Nodes	&	$\Delta$LB&	$\Delta$UB&	Time	&	$\Delta$LB&	$\Delta$UB&	Gap	&	Time	&	Nodes	\\
\midrule
\multirow{3}{*}{20}	&	0.90	&	1	&	0.00	&	10	&	3297	&	3.08	&	-1.65	&	1	&	0.00	&	0.00	&	0.00	&	1	&	1	\\
	&	0.95	&	1	&	0.00	&	28	&	8596	&	4.49	&	-1.86	&	3	&	0.00	&	0.00	&	0.00	&	3	&	11	\\
	&	1.00	&	1	&	0.00	&	528	&	163231	&	6.92	&	25.65	&	5	&	0.00	&	0.00	&	0.00	&	6	&	58	\\
\midrule
\multirow{3}{*}{40}	&	0.90	&	1	&	0.00	&	24	&	8424	&	3.87	&	-1.37	&	3	&	0.00	&	0.00	&	0.00	&	4	&	70	\\
	&	0.95	&	1	&	0.00	&	289	&	131431	&	5.32	&	-3.10	&	5	&	0.00	&	0.00	&	0.00	&	6	&	63	\\
	&	1.00	&	1	&	3.04	&	3251	&	809102	&	9.50	&	123.09	&	13	&	2.64	&	-0.52	&	0.00	&	135	&	1487	\\
\midrule
\multirow{3}{*}{60}	&	0.90	&	1	&	0.00	&	24	&	9227	&	3.74	&	-1.48	&	5	&	0.00	&	0.00	&	0.00	&	6	&	94	\\
	&	0.95	&	1	&	0.00	&	677	&	228024	&	5.74	&	50.50	&	10	&	0.00	&	0.00	&	0.00	&	32	&	592	\\
	&	1.00	&	1	&	6.16	&	3610	&	985554	&	10.60	&	192.02	&	19	&	5.27	&	0.84	&	1.94	&	2294	&	16412	\\
\bottomrule
\end{tabular}
\caption{Average results on $20\times 20$ instances.}
\label{t20}
\end{center}
\end{table}


\begin{table}[ht]
\small
\begin{center}
\begin{tabular}{ccrrrrrrrrrrrr}
\toprule
	&		&	\multicolumn{4}{c}{$\ip$}					&\multicolumn{8}{c}{$\ipz$}						\\
				\cmidrule(lr){3-6}							\cmidrule(lr){7-14}
	&		&	\multicolumn{1}{c}{Root}		&\multicolumn{3}{c}{B\&C}	&\multicolumn{3}{c}{Root}	&\multicolumn{5}{c}{B\&C}			\\
			\cmidrule(lr){3-3}		\cmidrule(lr){4-6}				\cmidrule(lr){7-9}		\cmidrule(lr){10-14}
$B$	&	$r$	&	Time	&	Gap	&	Time	&	Nodes	&	$\Delta$LB&	$\Delta$UB&	Time	&	$\Delta$LB&	$\Delta$UB&	Gap	&	Time	&	Nodes	\\
\midrule
\multirow{3}{*}{20}	&	0.90	&	1	&	0.00	&	167	&	29033	&	2.06	&	-1.95	&	4	&	0.00	&	0.00	&	0.00	&	4	&	22	\\
	&	0.95	&	2	&	0.17	&	853	&	114655	&	3.87	&	-4.89	&	7	&	0.17	&	0.00	&	0.00	&	8	&	43	\\
	&	1.00	&	3	&	2.31	&	2905	&	308104	&	5.49	&	69.52	&	15	&	2.16	&	-0.22	&	0.00	&	68	&	789	\\
\midrule
\multirow{3}{*}{40}	&	0.90	&	2	&	0.00	&	626	&	106839	&	3.58	&	-2.36	&	10	&	0.00	&	0.00	&	0.00	&	16	&	180	\\
	&	0.95	&	2	&	0.87	&	2419	&	329429	&	4.76	&	-1.68	&	17	&	0.86	&	-0.03	&	0.00	&	42	&	475	\\
	&	1.00	&	3	&	8.66	&	3600	&	427371	&	8.08	&	353.10	&	31	&	5.75	&	-0.32	&	2.93	&	2824	&	13022	\\
\midrule
\multirow{3}{*}{60}	&	0.90	&	2	&	0.00	&	290	&	58686	&	3.60	&	-2.67	&	14	&	0.00	&	0.00	&	0.00	&	15	&	84	\\
	&	0.95	&	3	&	1.89	&	2585	&	327116	&	4.63	&	231.29	&	19	&	1.82	&	-0.13	&	0.00	&	184	&	1323	\\
	&	1.00	&	3	&	10.92	&	3600	&	456224	&	8.77	&	468.14	&	46	&	6.51	&	7.81	&	11.90	&	3600	&	12197	\\
\bottomrule
\end{tabular}
\caption{Average results on $30\times 30$ instances.}
\label{t30}
\end{center}
\end{table}


\begin{table}[ht]
\small
\begin{center}
\begin{tabular}{ccrrrrrrrrrrrr}
\toprule
	&		&	\multicolumn{4}{c}{$\ip$}					&\multicolumn{8}{c}{$\ipz$}						\\
				\cmidrule(lr){3-6}							\cmidrule(lr){7-14}
	&		&	\multicolumn{1}{c}{Root}		&\multicolumn{3}{c}{B\&C}	&\multicolumn{3}{c}{Root}	&\multicolumn{5}{c}{B\&C}			\\
			\cmidrule(lr){3-3}		\cmidrule(lr){4-6}				\cmidrule(lr){7-9}		\cmidrule(lr){10-14}
$B$	&	$r$	&	Time	&	Gap	&	Time	&	Nodes	&	$\Delta$LB&	$\Delta$UB&	Time	&	$\Delta$LB&	$\Delta$UB&	Gap	&	Time	&	Nodes	\\
\midrule
\multirow{3}{*}{20}	&	0.90	&	3	&	0.01	&	402	&	39345	&	2.15	&	-2.23	&	9	&	0.00	&	0.00	&	0.01	&	8	&	2	\\
	&	0.95	&	4	&	1.53	&	3303	&	246765	&	3.80	&	-6.49	&	16	&	1.38	&	-0.18	&	0.00	&	20	&	123	\\
	&	1.00	&	5	&	5.27	&	3600	&	257034	&	5.83	&	240.27	&	33	&	4.22	&	-1.14	&	0.15	&	1193	&	5751	\\
\midrule
\multirow{3}{*}{40}	&	0.90	&	3	&	0.57	&	2221	&	195118	&	3.14	&	-4.13	&	21	&	0.53	&	-0.05	&	0.00	&	22	&	73	\\
	&	0.95	&	5	&	4.32	&	3600	&	260138	&	4.98	&	61.81	&	34	&	3.46	&	-1.02	&	0.00	&	175	&	1070	\\
	&	1.00	&	5	&	10.93	&	3600	&	223986	&	8.51	&	408.39	&	66	&	6.64	&	5.31	&	9.65	&	3360	&	7815	\\
\midrule
\multirow{3}{*}{60}	&	0.90	&	3	&	0.57	&	1725	&	150346	&	2.89	&	-3.26	&	25	&	0.51	&	-0.07	&	0.00	&	48	&	308	\\
	&	0.95	&	4	&	4.61	&	3600	&	265815	&	4.73	&	257.09	&	40	&	3.89	&	-0.91	&	0.01	&	1023	&	4430	\\
	&	1.00	&	5	&	13.31	&	3602	&	216712	&	8.68	&	492.87	&	96	&	6.71	&	12.08	&	17.39	&	3600	&	6045	\\
\bottomrule
\end{tabular}
\caption{Average results on $40\times 40$ instances.}
\label{t40}
\end{center}
\end{table}


We observe that the instances become more challenging as $n$, $B$, and $r$ increase, with both formulations taking longer and exploring more nodes. In particular, they become extremely hard for $r=1.00$.

When looking at the results of branch-and-cut, we observe that $\ipz$ clearly outperforms $\ip$ in both time and number of nodes explored, with reductions of up to two orders of magnitude. The sole exception are the hardest instances for which both formulations run out of time. 

Also note that $\ipz$ spends more time processing the root node than $\ip$, yielding improvements of the lower bound of up to 10\%. This improvements seems to increase with $B$ and $r$. The quality of the incumbent found by $\ipz$ at the root decreases with the instance size, but it seems that CPLEX manages to find good solutions further in the search tree. The lower bounds obtained by $\ipz$ are always better than those obtained by $\ip$. The upper bounds are most often better, except for hardest instances.

\textbf{Acknowlegements:} Gustavo Angulo was supported by a Postdoctoral Fellowship in Operation Research at CORE, Universit\'e catholique de Louvain. Mathieu Van Vyve was supported by the Interuniversity Attraction Poles Programme P7/36 COMEX of the Belgian Science Policy Office and the Marie Curie ITN "MINO" from the European Commission.

\bibliographystyle{amsplain} 
\bibliography{fctpdp}

\end{document}